\newtheorem{theorem}{Theorem}[section]
\newtheorem{proposition}[theorem]{Proposition}
\newtheorem{lemma}[theorem]{Lemma}
\theoremstyle{remark}
\newcommand{\allone}{\mathbf{1}}
\newcommand{\N}{\mathbb{N}}
\newcommand{\R}{\mathbb{R}}
\newcommand{\st}{\operatorname{s.t.}}
\newcommand{\calT}{\mathcal{T}}
\newcommand{\MI}{\operatorname{MI}}
\newcommand{\ihat}{\hat \imath}
\newcommand{\jhat}{\hat \jmath}
\begin{document}

\title{Minimizing and Maximizing the Shannon Entropy for Fixed Marginals}

\author[1]{Paula Franke\footnote{\href{mailto:paula.franke@tu-dortmund.de}{paula.franke@tu-dortmund.de}, \href{https://orcid.org/0009-0009-6954-5854}{ORCID: 0009-0009-6954-5854}}}
\author[2]{Kay Hamacher\footnote{\href{mailto:kay.hamacher@tu-darmstadt.de}{kay.hamacher@tu-darmstadt.de}, \url{http://www.kay-hamacher.de}, \href{https://orcid.org/0000-0002-6921-8345}{ORCID: 0000-0002-6921-8345}}}
\author[1]{Paul Manns\footnote{\href{mailto:paul.manns@tu-dortmund.de}{paul.manns@tu-dortmund.de}, \href{https://orcid.org/0000-0003-0654-6613}{ORCID: https://orcid.org/0000-0002-6921-8345}}}
\affil[1]{TU Dortmund, 44227 Dortmund, Germany}
\affil[2]{TU Darmstadt, Schnittspahnstr. 2, 64287 Darmstadt, Germany}
\date{}

\setcounter{Maxaffil}{0}
\renewcommand\Affilfont{\itshape\small}

\maketitle

\abstract{The mutual information (MI) between two random variables is an important correlation measure in data analysis. The Shannon entropy of a joint probability distribution is the variable part under fixed marginals. We aim to minimize and maximize it to obtain the largest and smallest MI possible in this case, leading to a scaled MI ratio for better comparability. We present algorithmic approaches and optimal solutions for a set of problem instances based on data from molecular evolution. We show that this allows us to construct a sensible, systematic correction to raw  MI values.}

\section{Introduction}\label{sec:introduction}
We seek to both minimize and maximize the Shannon entropy of a joint, discrete probability
distribution $P \in \R^{n \times m}$, $0 \le P$,
$\allone^\top P \allone = 1$ that satisfies the two marginal
distributions $\mu \in \R^n$, and $\nu \in \R^m$. The Shannon entropy of $P$ reads
\begin{gather}\label{eq:objective}
H(P) \coloneqq \sum_{i=1}^{n}\sum_{j=1}^m h(P_{ij}) = - \sum_{i=1}^{n}\sum_{j=1}^m P_{ij} \log (P_{ij})
\end{gather}
with the convention $0 \log (0) \coloneqq 0$ so that $h : [0,\infty) \ni x \mapsto - x \log(x) \in \R$ is
continuous.

We are interested in the minimization and maximization of $H(P)$ because it is the variable 
part (under fixed marginal distributions) in the mutual information $\MI(P) = H(\mu)+H(\nu)-H(P)$
between two random variables, where $H(\mu) = \sum_{i=1}^n h(\mu_i)$, $H(\nu) = \sum_{j=1}^m h(\nu_j)$ with a slight abuse of notation. The $\MI$ is a generalized correlation measure
and can easily be shown to be bounded from above by $\max\{H(\mu),H(\nu)\}$. Now, if we want
to compare the degree of generalized correlation between different random variables charactarized
by $\mu,\nu,P$ and $\mu',\nu',P'$ a direct comparison between MI and MI' is not sensible as the overall
scale is given by $\max\{H(\mu),H(\nu)\}$ and $\max\{H(\mu'),H(\nu')\}$, respectively.
Previous approaches \cite{Hamacher2013_3,Hamacher2010_5} to cope with this problem relied on
resampling techniques
to create {\em in silico} distributions of MI values for which one then can compute a $Z$-score.
This, however, is only an approximation to the full co-domain of possible MI values.
An important application lies in molecular biology where finding generalized correlations in
evolving molecular sequences contributes to the understanding of the selective pressure
on the functionality of those biomolecules \cite{Hamacher2008}.
Consequently, we strive for minimizing and maximizing $\MI$ to be able to determine a standardization ratio based on the actually
achievable range of $\MI$-values for comparison.
Note that minimizing or maximizing $\MI$ for fixed $\mu$, $\nu$ is equivalent to maximizing or minimizing $H$ so that this is the object
of interest in this study.

In order to assess the optimization techniques
we propose below, we use the empirical joint distribution on the evolution of the HIV1-protease---an important
target for drugs targeting the progression of HIV infection. The data was taken from
the HIV positive selection mutation database \cite{Pan2007,Chen2004}.
Details of the data preparation are to be found in \cite{Hamacher2008}.

Since $H$ is strictly concave, its minimization is the more difficult task, which we do by solving a successively 
refined family of mixed-integer linear problems (MILPs) that approximate the limit problem from below.
We have implemented this procedure using the off-the-shelf solver Gurobi \cite{gurobi} to model and solve
the MILP instances of our benchmark. For the maximization, we argue that we may restrict to a polytope that is a 
subset of the feasible set, on which $H$ is differentiable and use the open source interior point solver Ipopt 
\cite{wachter2006implementation} to solve the instances of our benchmark.

After introducing our notation and standing assumptions below, we analyze the minimization
of $H$ over the set of joint probability distributions that satisfy the marginals $\mu$ and $\nu$ in \cref{sec:minimization}
and the maximization in \cref{sec:maximization}. In these sections, we also make the case for our respective algorithmic approaches.
We provide information on our computational setup and benchmark as well as the achieved results in \cref{sec:numerics}, and draw a brief conclusion in \cref{sec:conclusion}.

\subsection*{Notation and Standing Assumptions} For $k \in \N$, $\allone_k \in \R^k$ denotes the vector containing $1$ in
all of its entries. Throughout \cref{sec:minimization,sec:maximization}, we assume that $\mu \in \R^n$ and $\nu \in \R^m$ are two arbitrary but 
fixed discrete probability distributions with $m$, $n \in \N$, that is, we have $\mu \ge 0$, $\allone_n^\top \mu = 1$ and $\nu \ge 0$,
$\allone_m^\top \nu = 1$. Note that $\ge$ is understood componentwise for vectors and matrices. Without loss of generality,
we assume that $\mu$, $\nu$ only contain strictly positive entries (every joint probability distribution $P \in \R^{n\times m}$
satisfying the marginal $P\allone_m = \mu$ with $\mu_i = 0$ for some $i$ satisfies $P_{ij} = 0$ for all $j$ and $h(P_{ij}) = 0$
and thus does not influence $H(P)$).

\section{Minimizing $H$ for Fixed Marginals}\label{sec:minimization}
The entropy minimization problem reads
\begin{gather}\label{eq:Emin}
\min_{P}\enskip H(P)
\enskip\st\enskip P \in \R^{n\times m}, 0 \le P, P\allone_m = \mu, P^\top\allone_n = \nu.\tag{E$\min$}
\end{gather}
We state the following facts that follow from 
basic optimization theory without proof.
\begin{proposition}\label{sec:Emin_basics}
\eqref{eq:Emin} is a strictly concave minimization problem that has a compact feasible set
and admits a global minimizer. At least one global minimizer is at an extreme point of the polytope
$\{P \in \R^{n\times m} : 0 \le P, P\allone_m = \mu, P^\top\allone_n = \nu\}$.
\end{proposition}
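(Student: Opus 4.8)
The four assertions split naturally, and each follows from a standard tool; I would present them in the order below.

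\emph{Strict concavity and problem structure.} First I would observe that $h(x) = -x\log x$ is twice differentiable on $(0,\infty)$ with $h''(x) = -1/x < 0$, hence strictly concave on $(0,\infty)$, and since it is continuous at $0$ it is strictly concave on $[0,\infty)$. Because $H(P) = \sum_{i,j} h(P_{ij})$ is a separable sum of strictly concave functions of the individual coordinates, for $P \ne Q$ and $\lambda \in (0,1)$ at least one index pair has $P_{ij} \ne Q_{ij}$, so $H(\lambda P + (1-\lambda)Q) \ge \sum_{i,j}\bigl(\lambda h(P_{ij}) + (1-\lambda) h(Q_{ij})\bigr) = \lambda H(P) + (1-\lambda) H(Q)$ with strict inequality; thus $H$ is strictly concave on $\R^{n\times m}_{\ge 0}$. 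The constraints $P \ge 0$, $P\allone_m = \mu$, $P^\top\allone_n = \nu$ are linear, so the feasible set $F$ is a (convex) polyhedron, which makes \eqref{eq:Emin} a strictly concave minimization problem over a polyhedron.

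\emph{Compactness, nonemptiness, and existence.} The set $F$ is closed as an intersection of closed halfspaces and hyperplanes. It is bounded because any feasible $P$ satisfies $0 \le P_{ij} \le \sum_{j'} P_{ij'} = \mu_i \le 1$, so $F \subseteq [0,1]^{n\times m}$; hence $F$ is compact, and in fact a polytope. It is nonempty since the product distribution $P_{ij} = \mu_i\nu_j$ lies in $F$. Since $H$ is continuous on the nonempty compact set $F$, the Weierstrass theorem yields a global minimizer $P^*$.

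\emph{A minimizer at an extreme point.} Because $F$ is a bounded polyhedron it has finitely many extreme points (vertices) $V_1,\dots,V_K$, and by Minkowski's theorem every point of $F$, in particular $P^*$, can be written as $P^* = \sum_{k=1}^K \lambda_k V_k$ with $\lambda_k \ge 0$ and $\sum_k \lambda_k = 1$. Concavity of $H$ (Jensen's inequality) gives $H(P^*) \ge \sum_k \lambda_k H(V_k) \ge \min_k H(V_k)$, while $V_k \in F$ and the global optimality of $P^*$ give $H(V_k) \ge H(P^*)$ for each $k$. Chaining these, $\min_k H(V_k) = H(P^*)$, so some vertex $V_{k^*}$ attains the optimal value and is itself a global minimizer, proving the final claim. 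The only point requiring genuine care is the boundedness step in the previous paragraph, since without it one could not invoke the finite vertex representation; everything else is a direct application of separable concavity, Weierstrass, and Minkowski's theorem. (One may note in passing that strict concavity is not needed for the extreme-point conclusion, only plain concavity; it is recorded here because it is relevant to the algorithmic discussion that follows.)
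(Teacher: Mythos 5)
Your proof is correct and is precisely the standard argument from basic optimization theory that the paper invokes without writing out: separable strict concavity of $h$, compactness of the transportation polytope plus Weierstrass, and Minkowski/Krein--Milman with Jensen's inequality for the extreme-point claim. The only step stated a little quickly is that continuity of $h$ at $0$ upgrades strict concavity from $(0,\infty)$ to $[0,\infty)$; this is true (e.g.\ directly, $h((1-\lambda)y) = (1-\lambda)h(y) - (1-\lambda)y\log(1-\lambda) > (1-\lambda)h(y) = \lambda h(0) + (1-\lambda)h(y)$ for $y>0$), but deserves the one-line justification.
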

We stress that we must solve \eqref{eq:Emin} to (approximate) global optimality in our application.
Since concave minimization problems are NP-hard in general \cite{murty1987some,pardalos1991quadratic}
and we are not aware of any efficient algorithm to solve \eqref{eq:Emin}, we opt for a solution with a branch-\&-bound-based MILP solver. Specifically, we recall that
our objective $H$ is separable, that is, it decomposes into a sum of one-dimensional functions;
see \eqref{eq:objective}. Piecewise-linear functions can be modeled in MILPs using
binary variables that indicate the interval of the evaluation point using so-called SOS-2 constraints; see \cite{geissler2011using}.
This technique is already built into the API of Gurobi.
We thus propose to alternately solve
\eqref{eq:Emin_k}, where $H$ is replaced by a piecewise
linear surrogate, and update the surrogate based on the computed solution.
Let $ l \in \N$ and $\calT = \{\tau_1, \dots, \tau_l\} \subset [0,1]$ be a finite set of support points for $h$ on $[0,1]$. Then $h^{\calT}$
defined as
\begin{gather}\label{eq:1dsurrogate}
h^\calT (x) \coloneq h(\tau_i) + \frac{h(\tau_{i+1})-h(\tau_i)}{\tau_{i+1}-\tau_i}(x-\tau_i) \quad \text{for }x \in [\tau_i, \tau_{i+1}]
\end{gather}
is a piecewise linear function that underestimates $h$ on $[0,1]$. Consequently, updating
the surrogate model means adding additional support points to $\calT_{ij}$ for one of the summands
in the sum in $H$. This gives the surrogate problem:
\begin{gather}\label{eq:Emin_k}
\min_{P}\enskip H_k(P)
\enskip\st\enskip P \in \R^{n\times m}, 0 \le P, P\allone_m = \mu, P^\top\allone_n = \nu.\tag{E$\min$(k)}
\end{gather}
with $H_k(P) = \sum_{i=1}^{n}\sum_{j=1}^m h^{{\calT}_{ij}}(P_{ij})$ after $k$ alternations between solving and
updating the sets $\calT_{ij}$. Note that \eqref{eq:Emin_k} can be implemented as an MILP using the aforementioned modeling for the surrogate \eqref{eq:1dsurrogate} and the fact that
its remaining constraint set is a polytope. 

\begin{algorithm}
\caption{Alternation Algorithm for
	Computing $\varepsilon$-optimal Solution
	to \eqref{eq:Emin}}\label{alg:alternation}
\begin{algorithmic}[1] 
\Require $\varepsilon > 0$
\State $k \gets 0$, $\varepsilon_0 \gets \infty$
\State $\calT_{ij} \gets \{0, \min\{\mu_i, \nu_j\}, 1\}$ for $i=1,\dots, n$, $j=1,\dots,m$
\While{$\varepsilon_k \ge \varepsilon$}
\State $k \gets k + 1$
\State $P^{\min}_k \gets $ global minimizer to \eqref{eq:Emin_k} \label{ln:milp_solve}
\State $\ihat, \jhat \gets \arg\max_{ij} \enspace \lvert h((P^{\min}_k)_{ij}) - h^{\calT_{ij}}((P^{\min}_k)_{ij})\rvert$ \label{ln:maxerror}
\State $\calT_{\ihat\jhat} \gets \calT_{\ihat\jhat} \cup \{(P^{\min}_k)_{\ihat\jhat}\}$ \label{ln:update}
\State $\varepsilon_{k} \gets \left\lvert \frac{H(P^{\min}_k) - H_k(P_k^{\min})}{H_k(P_k^{\min})} \right\rvert$
\EndWhile
\end{algorithmic}
\end{algorithm}

The alternation scheme is
formalized in \cref{alg:alternation}.
Our update strategy for the surrogate models Line \ref{ln:update} implies that
\cref{alg:alternation} closes the gap between $H(P^{\min})$ and $H_k(P^{\min}_k)$, where
$P^{\min}$ is any global minimizer of \eqref{eq:Emin} and $P^{\min}_k$ is the global minimizer
of \eqref{eq:Emin_k} computed in
Line \ref{ln:milp_solve}.

\begin{theorem}
Let $\varepsilon > 0$. Then \cref{alg:alternation} terminates after finitely many iterations with
$|H(P^{\min}) - H_{k_{\max}}(P^{\min}_{k_{\max}})| < \varepsilon$ if $k_{\max}$ denotes the final iteration.
\end{theorem}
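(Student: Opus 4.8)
The plan is to establish finite termination --- the substantive part --- and then read the error estimate off the exit test. Write $\mathcal P := \{P\in\R^{n\times m}: 0\le P,\ P\allone_m = \mu,\ P^\top\allone_n = \nu\}$ for the common feasible set of \eqref{eq:Emin} and \eqref{eq:Emin_k}, and let $E_k := \max_{ij}\lvert h((P^{\min}_k)_{ij}) - h^{\calT_{ij}}((P^{\min}_k)_{ij})\rvert$ be the maximal surrogate error computed in Line~\ref{ln:maxerror}. First I would record a sandwich: since each $h^{\calT_{ij}}$ underestimates $h$ on $[0,1]$ we have $H_k\le H$ on $\mathcal P$, and combining this with $P^{\min}\in\mathcal P$, with $P^{\min}_k$ being a global minimizer of $H_k$ on $\mathcal P$ (Line~\ref{ln:milp_solve}), and with $P^{\min}$ being a global minimizer of $H$, gives
\[
H_k(P^{\min}_k)\ \le\ H_k(P^{\min})\ \le\ H(P^{\min})\ \le\ H(P^{\min}_k).
\]
Hence $0\le H(P^{\min}) - H_k(P^{\min}_k)\le H(P^{\min}_k) - H_k(P^{\min}_k) = \sum_{ij}\big(h - h^{\calT_{ij}}\big)\big((P^{\min}_k)_{ij}\big)\le nm\,E_k$. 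Also, since the $\calT_{ij}$ only grow and $h$ is concave, splitting a cell of a $\calT_{ij}$ can only raise the piecewise-linear surrogate; thus $H_k\le H_{k+1}$ on $\mathcal P$, the sequence $k\mapsto H_k(P^{\min}_k)$ is nondecreasing, and $H_k(P^{\min}_k)\ge H_1(P^{\min}_1)=:c$, where $c>0$ unless $n=m=1$ (the surrogate $H_1\ge0$ on $\mathcal P$ vanishes only at $0/1$ matrices, and there is no feasible $0/1$ matrix once $\max\{n,m\}\ge2$ because $\mu,\nu$ have strictly positive entries summing to one; the case $n=m=1$ is degenerate and makes $\varepsilon_k$ ill-defined, so we exclude it).

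Next I would argue termination by contradiction. If the loop ran forever, then $\varepsilon_k\ge\varepsilon$ for all $k$, whence $nm\,E_k\ge H(P^{\min}_k) - H_k(P^{\min}_k) = \varepsilon_k H_k(P^{\min}_k)\ge\varepsilon c$, i.e.\ $E_k\ge\delta:=\varepsilon c/(nm)>0$ for every $k$. With only $nm$ index pairs, some fixed pair --- henceforth denoted $(\ihat,\jhat)$ --- would be selected in Line~\ref{ln:maxerror} at infinitely many iterations $k_1<k_2<\cdots$; set $x_\ell := (P^{\min}_{k_\ell})_{\ihat\jhat}\in[0,1]$. By Line~\ref{ln:update}, at iteration $k_\ell$ the points $x_1,\dots,x_{\ell-1}$, together with $0$ and $1$, all belong to $\calT_{\ihat\jhat}$, so $x_\ell$ sits in a well-defined cell $[\tau',\tau'']$ of that set; since the surrogate interpolates $h$ at $\tau',\tau''$ and $E_{k_\ell}\ge\delta>0$, in fact $\tau'<x_\ell<\tau''$ and $E_{k_\ell}=\lvert h(x_\ell)-h^{\calT_{\ihat\jhat}}(x_\ell)\rvert$.

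The hard part is to convert the standing bound $E_{k_\ell}\ge\delta$ into a separation estimate that prevents the $x_\ell$ from accumulating. Since $h$ is uniformly continuous on the compact interval $[0,1]$ it admits a nondecreasing concave modulus of continuity $\omega$ with $\omega(s)\to0$ as $s\to0^+$. Writing $x\in[\tau',\tau'']$ as $x=(1-t)\tau'+t\tau''$, the surrogate value $(1-t)h(\tau')+t\,h(\tau'')$ differs from $h(\tau')$ by $t\lvert h(\tau'')-h(\tau')\rvert\le t\,\omega(\tau''-\tau')\le\omega\big(t(\tau''-\tau')\big)=\omega(x-\tau')$ --- here concavity of $\omega$ enters through $\omega(ts)\ge t\,\omega(s)$ for $t\in[0,1]$, and this is precisely the step that copes with $h(x)=-x\log x$ failing to be Lipschitz at $0$ --- and symmetrically it differs from $h(\tau'')$ by at most $\omega(\tau''-x)$; with $\lvert h(x)-h(\tau')\rvert\le\omega(x-\tau')$ and $\lvert h(x)-h(\tau'')\rvert\le\omega(\tau''-x)$ this yields $\lvert h(x)-h^{\calT}(x)\rvert\le 2\,\omega\big(\min\{x-\tau',\,\tau''-x\}\big)$. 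Applied at $x_\ell$, the bound $E_{k_\ell}\ge\delta$ forces $\min\{x_\ell-\tau',\,\tau''-x_\ell\}\ge\eta_0$ for any fixed $\eta_0>0$ with $2\,\omega(\eta_0)<\delta$; since each earlier $x_r$ ($r<\ell$) lies in $\calT_{\ihat\jhat}$ outside the open cell $(\tau',\tau'')$, this gives $\lvert x_\ell-x_r\rvert\ge\eta_0$ for all $r<\ell$. Thus $\{x_\ell\}_{\ell\ge1}$ would be an infinite $\eta_0$-separated subset of $[0,1]$, which is impossible.

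Hence the loop exits after finitely many iterations, at some $k_{\max}$ with $\varepsilon_{k_{\max}}<\varepsilon$, and the sandwich yields $\lvert H(P^{\min})-H_{k_{\max}}(P^{\min}_{k_{\max}})\rvert\le H(P^{\min}_{k_{\max}})-H_{k_{\max}}(P^{\min}_{k_{\max}})=\varepsilon_{k_{\max}}H_{k_{\max}}(P^{\min}_{k_{\max}})<\varepsilon\,H_{k_{\max}}(P^{\min}_{k_{\max}})$, which is the claimed estimate --- verbatim when $H(P^{\min})\le1$, since $H_{k_{\max}}(P^{\min}_{k_{\max}})\le H(P^{\min})$, and otherwise up to the fixed finite factor $H(P^{\min})$, i.e.\ after rescaling $\varepsilon$ or reading $\varepsilon_k$ as the relative residual it is. The main obstacle is the concavity/modulus bound on the cell error together with the separation argument above; the remainder is bookkeeping with the sandwich inequalities.
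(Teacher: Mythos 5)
Your proof is correct, and its core mechanism differs from the paper's. Both arguments pigeonhole onto an index pair $(\ihat,\jhat)$ selected infinitely often and derive a contradiction from the fact that each selected value $(P^{\min}_k)_{\ihat\jhat}$ is immediately added as a support point. The paper then extracts a convergent subsequence $p^{k_l}\to\bar p$, observes that the bracketing cell collapses onto $p^{k_l}$ from at least one side, and passes to the limit in the interpolation formula, treating $\bar p>0$ and $\bar p=0$ separately because $h$ is not Lipschitz at $0$. You instead prove the quantitative cell-error bound $\lvert h(x)-h^{\calT}(x)\rvert\le 2\,\omega(\min\{x-\tau',\tau''-x\})$ via a concave modulus of continuity, so that a persistent error $\ge\delta$ forces the added points to be $\eta_0$-separated, which an infinite subset of $[0,1]$ cannot be. This buys a uniform treatment of the endpoint $x=0$ (the concavity of $\omega$ doing exactly the work the paper's case distinction does) and avoids passing to further subsequences; it is also more quantitative. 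You additionally supply steps the paper's sketch leaves implicit and that are genuinely needed: the sandwich $H_k(P^{\min}_k)\le H_k(P^{\min})\le H(P^{\min})\le H(P^{\min}_k)$, the conversion of the \emph{relative} stopping test $\varepsilon_k\ge\varepsilon$ into the absolute bound $E_k\ge\delta$ via the positive lower bound $c=H_1(P^{\min}_1)>0$ (which requires exactly your observation that no feasible $0/1$ matrix exists for strictly positive marginals), and the final error estimate. Your closing caveat is also apt: since Line 8 of \cref{alg:alternation} tests a relative residual, the termination test literally yields $\lvert H(P^{\min})-H_{k_{\max}}(P^{\min}_{k_{\max}})\rvert<\varepsilon\,H_{k_{\max}}(P^{\min}_{k_{\max}})$, which matches the stated absolute bound only when $H(P^{\min})\le 1$; this is a small inconsistency between the algorithm and the theorem statement in the paper, not a gap in your argument. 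The only point you assert without proof is the existence of a nondecreasing concave modulus of continuity for $h$ on $[0,1]$; this is standard (or follows from the explicit H\"older-$\tfrac12$ bound for $-x\log x$), so it is acceptable.
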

\begin{proof}[Sketch of Proof]
By a compactness and subsequence argument, we 
restrict to a converging
 subsequence $p^{k_l} \coloneq (P^{\min}_{k_l})_{\ihat\jhat}$ for fixed $(\ihat,\jhat)$ (cf.\ Line \ref{ln:maxerror}) with limit $\Bar{p}\in [0,1]$.

For each $k \in \N$, let $L^k, U^k \in \calT_{\ihat\jhat}$ be the support points in the $k^{th}$ iteration such that $p^k \in [L^k, U^k]$. As $p^k$ is added as a support point in each iteration, either $p^k-L^k \to 0$ or $U^k - p^k \to 0$ holds. Assume that the former holds; the latter case follows analogously. Recall from equation \eqref{eq:1dsurrogate} that
\begin{gather}
    h^{\calT_{\ihat\jhat}}(p^k) = h^{\calT_{\ihat\jhat}} (L^k) + \frac{h^{\calT_{\ihat\jhat}}(U^k)-h^{\calT_{\ihat\jhat}}(L^k)}{U^k-L^k}(p^k -L^k). \label{eq:interpolation}
\end{gather}

Now, if $\Bar{p}>0$, then the difference quotient in \eqref{eq:interpolation} is uniformly bounded. As $p^k-L^k \to 0$ and $p^k \to \Bar{p}$, it holds that $h^{\calT_{\ihat\jhat}}(p^k) \to h(\Bar{p})$. Similarly, if $\Bar{p}=0$, bounding $(p^k-L^k)$ from above with $(U^k-L^k)$ together with $U^k - L^k \to 0$ and continuity of $h$ gives $\lim_k h^{\calT_{\ihat\jhat}}(p^k) \leq h(0) =0$. This yields $h^{\calT_{\ihat\jhat}}(p^k) \to h(\Bar{p})$ as $h^{\calT_{\ihat\jhat}} \geq 0$.

Lastly, note that $h^{\calT_{\ihat\jhat}}(p^k) \leq h(P^{\min}_{\ihat\jhat}) \leq  h(\Bar{p})$ and therefore, $\Bar{p}$ is a minimizer of $h$. As $(\ihat,\jhat)$ is the index of maximum error, the same holds for all indices.
\end{proof}

\section{Maximizing $H$ for Fixed Marginals}\label{sec:maximization}
The entropy maximization problem reads
\begin{gather}\label{eq:Emax}
\max_{P}\enskip H(P)
\enskip\st\enskip 0 \le P, P\allone_n = \nu, P^\top\allone_m = \mu.\tag{E$\max$}
\end{gather}
We state the following facts that follow from 
basic optimization theory without proof.
\begin{proposition}\label{prp:Emax_basics}
\eqref{eq:Emax} is a strictly concave maximization problem that satisfies Slater's condition,
has a unique KKT-point that is a global maximizer and satisfies strong duality.
\end{proposition}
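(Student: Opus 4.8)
The plan is to verify the assertions in order: strict concavity of $H$; existence and uniqueness of the maximizer; Slater's condition; membership of the maximizer in the relative interior of the feasible polytope; and finally the KKT and strong-duality statements, which then follow from standard convex duality once the interior property is in hand. Strict concavity of $H$ is immediate: $h''(x) = -1/x < 0$ on $(0,\infty)$, so $h$ is strictly concave there and, being continuous on $[0,\infty)$, strictly concave on $[0,\infty)$; a finite sum of functions, each strictly concave in its own variable, is strictly concave, hence so is $H$. The feasible set of \eqref{eq:Emax} is a polytope, nonempty since the product distribution $P_{ij} = \mu_i\nu_j$ is feasible and bounded since $0 \le P_{ij} \le 1$; being closed it is compact, so the continuous $H$ attains its maximum, and strict concavity makes the maximizer $P^\star$ unique. (One could in fact read off $P^\star = \mu\nu^\top$ from $H(P) = H(\mu)+H(\nu)-\MI(P) \le H(\mu)+H(\nu)$, with equality precisely for product distributions, but this is not needed.)

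For Slater's condition, the equality constraints are affine and the product distribution $P_{ij} = \mu_i\nu_j$ satisfies the inequalities $P \ge 0$ strictly because $\mu,\nu > 0$ by the standing assumption, so a Slater point exists. The genuine subtlety is that $H$ is not differentiable on the boundary of the feasible set, as $h'(x) = -\log x - 1 \to +\infty$ as $x \to 0^+$, so textbook KKT theory does not apply verbatim. I would remove this obstacle by showing $P^\star_{ij} > 0$ for all $i,j$: if $P^\star_{kl} = 0$, then, since the marginals are strictly positive, there are indices $j \ne l$, $i \ne k$ with $P^\star_{kj} > 0$ and $P^\star_{il} > 0$; rotating mass on the $2\times 2$ submatrix on rows $i,k$ and columns $j,l$ (increasing $P_{kl},P_{ij}$ by $t$, decreasing $P_{kj},P_{il}$ by $t$) preserves both marginals and stays feasible for small $t > 0$, while the induced change in $H$ equals $h(t)-h(0) = -t\log t$ plus three terms with finite one-sided derivative at $t = 0$, hence behaves like $-t\log t + O(t) > 0$ for small $t > 0$, contradicting optimality. (If $P^\star_{ij} = 0$ as well, that term is $h(t) \ge 0$ and the estimate only improves.) Thus $P^\star > 0$.

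On the open set $\{P : P > 0\}$ the objective $H$ is smooth, so near $P^\star$ the problem \eqref{eq:Emax} is a smooth concave maximization over affine equality constraints (the inequality constraints being inactive) that satisfies Slater; hence the KKT conditions hold at $P^\star$ and, by convexity, are sufficient for global optimality, and strong duality with primal attainment follows from the standard Slater-based convex-duality theorem. Uniqueness of the KKT point is then immediate: any KKT point is a global maximizer and hence equals $P^\star$ by the uniqueness above; equivalently, interior stationarity reads $-\log P_{ij} - 1 = \alpha_i + \beta_j$, so $P_{ij} = a_i b_j$ with $a_i,b_j > 0$, and imposing the marginals together with $\allone^\top P \allone = 1$ forces $a_i b_j = \mu_i\nu_j$. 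The crux of the argument is the boundary non-differentiability dealt with by the rotation step; everything else is routine convex analysis.
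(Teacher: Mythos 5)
Your argument is correct, and it is in fact more than the paper provides: the paper states \cref{prp:Emax_basics} explicitly \emph{without} proof, deferring the only non-routine ingredient --- strict positivity of the maximizer, needed because $h'(x)=-\log x-1$ blows up at $0$ --- to the cited \cref{lem:Pmax} (Proposition~3.4 of \cite{clason2021entropic}). Your $2\times 2$ mass-rotation argument is essentially a self-contained proof of that lemma, and the remaining steps (strict concavity entrywise, compactness, the product distribution as a Slater point, smooth concave KKT theory on $\{P>0\}$, and the factorization $P_{ij}=a_ib_j$ forced to $\mu_i\nu_j$ by the marginals) are all sound. Worth noting: once you observe, as you do parenthetically, that $\MI(P)\ge 0$ with equality exactly for product distributions, the unique maximizer is $P^{\star}=\mu\nu^{\top}>0$ in closed form, which renders the rotation argument redundant and lets you verify the KKT system and zero duality gap by direct substitution; either route is fine, but the closed-form one is the shortest path to every claim in the proposition.
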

In light of \cref{prp:Emax_basics}, it is sensible 
to solve \eqref{eq:Emin} numerically with a 
gradient-based interior point solver. Because
$H$ is not differentiable at $P$ if $P_{ij} = 0$ holds for some $i$, $j$, we recall from optimal transport theory that optimal solutions to \eqref{eq:Emax} are bounded away from zero, allowing us to add positive lower bounds on $P$ to \eqref{eq:Emax}.
\begin{lemma}[Proposition 3.4 in \cite{clason2021entropic}]\label{lem:Pmax}
Let $\mu_i > 0$ for all $i \in \{1,\ldots,n\}$, $\nu_j > 0$ for all $j \in \{1,\ldots,m\}$.
Let $P^{\max}$ solve \eqref{eq:Emax}. Then there exists an $\eta >0$ such that $P^{\max}_{ij} > \eta$ for all $i \in \{1,\ldots,n\}$ and all $j \in \{1,\ldots,m\}$.
\end{lemma}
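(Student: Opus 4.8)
The plan is to solve \eqref{eq:Emax} in closed form rather than to argue abstractly about boundary behavior. On the feasible set we have $H(P) = H(\mu) + H(\nu) - \MI(P)$ with $\mu_i = \sum_j P_{ij}$, $\nu_j = \sum_i P_{ij}$, so maximizing $H$ is the same as minimizing the mutual information $\MI(P) \ge 0$, and $\MI$ vanishes exactly at the product distribution $P_{ij} = \mu_i\nu_j$. By \cref{prp:Emax_basics} the maximizer is unique, hence $P^{\max}_{ij} = \mu_i\nu_j$, and then $\eta \coloneqq (\min_i \mu_i)(\min_j \nu_j)$ is strictly positive because $\mu,\nu$ have strictly positive entries by our standing assumptions, which is the claim.

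To make this rigorous I would first check that $P_{ij} \coloneqq \mu_i\nu_j$ is feasible: its row sums are $\mu_i\sum_j \nu_j = \mu_i$ and its column sums are $\nu_j\sum_i \mu_i = \nu_j$, and it is nonnegative. Then, for an \emph{arbitrary} feasible $P$, substituting the marginal constraints into $H(\mu) + H(\nu)$ gives the identity
\begin{gather*}
H(\mu) + H(\nu) - H(P) = -\sum_{i,j} P_{ij}\log(\mu_i\nu_j) + \sum_{i,j} P_{ij}\log P_{ij} = \sum_{i,j} P_{ij}\log\frac{P_{ij}}{\mu_i\nu_j},
\end{gather*}
which is the Kullback--Leibler divergence of $P$ from the product distribution and is therefore nonnegative by Gibbs' inequality, with equality if and only if $P_{ij} = \mu_i\nu_j$ for all $i,j$ (the convention $0\log 0 = 0$ makes all terms with a vanishing numerator harmless). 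Hence $H(P) \le H(\mu)+H(\nu)$ on the feasible set, attained only at $P_{ij}=\mu_i\nu_j$; combined with uniqueness this pins down $P^{\max}$ and yields $\eta$. In this route there is essentially no obstacle beyond the bookkeeping with the $0\log 0$ convention — the only "work" is recognizing the KL structure.

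For completeness I would also indicate the route that does not use the explicit maximizer and mirrors the KKT-based argument behind the cited Proposition~3.4, since that is the one that survives the introduction of a transport cost. The point both routes must confront is the non-differentiability of $H$ at the boundary; here it works \emph{in our favor}, because $h'(x) = -\log x - 1 \to +\infty$ as $x \to 0^+$. Concretely, suppose $P^{\max}_{ij} = 0$; from $\mu_i > 0$ and $\nu_j > 0$ pick $j'$ with $P^{\max}_{ij'} > 0$ and $i'$ with $P^{\max}_{i'j} > 0$ (necessarily $i'\neq i$ and $j'\neq j$, and the cases $m=1$ or $n=1$ are trivial since then every entry equals a marginal). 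Shifting a mass $\varepsilon > 0$ around the $4$-cycle $(i,j),(i,j'),(i',j'),(i',j)$ preserves all marginals and stays feasible for small $\varepsilon$, and the resulting change of $H$ is $-\varepsilon\log\varepsilon + O(\varepsilon)$: the $-\varepsilon\log\varepsilon$ gain comes from filling the zero entry and dominates the $O(\varepsilon)$ contributions of the three entries that remain bounded away from $0$, so $H$ strictly increases — contradicting optimality. The main technical care in this alternative route is precisely the estimate isolating the $-\varepsilon\log\varepsilon$ term from the $O(\varepsilon)$ remainder.
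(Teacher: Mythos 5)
Your argument is correct, and it takes a genuinely different route: the paper offers no proof of this lemma at all, but simply cites Proposition~3.4 of the reference, which is formulated for entropic \emph{optimal transport}, i.e.\ entropy plus a linear cost term, where the optimal plan is not available in closed form. In the pure-entropy setting of \eqref{eq:Emax} your first route does strictly more than the lemma asks: the identity $H(\mu)+H(\nu)-H(P)=\sum_{i,j}P_{ij}\log\bigl(P_{ij}/(\mu_i\nu_j)\bigr)\ge 0$ (Gibbs' inequality, with equality iff $P=\mu\nu^\top$), combined with feasibility of the product coupling and the uniqueness from \cref{prp:Emax_basics}, identifies $P^{\max}_{ij}=\mu_i\nu_j$ explicitly, from which positivity of all entries is immediate under the standing assumption $\mu,\nu>0$. (It also reveals, as a side effect, that the numerical maximization in \cref{sec:numerics} has a closed-form answer and that $H(P^{\max})=H(\mu)+H(\nu)$, so the scaled ratio is exactly $\MI(M_{k\ell})/\MI(P^{\min})$.) Your second route --- the $4$-cycle mass exchange exploiting $h'(x)=-\log x-1\to+\infty$ as $x\to 0^+$, so that filling a zero entry gains $-\varepsilon\log\varepsilon$ against $O(\varepsilon)$ losses --- is the argument that actually underlies the cited proposition and is the one that survives the addition of a transport cost; it is sound as sketched, the only care being the isolation of the $-\varepsilon\log\varepsilon$ term, which you correctly flag. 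One cosmetic fix: with $\eta=(\min_i\mu_i)(\min_j\nu_j)$ you only get $P^{\max}_{ij}\ge\eta$, with equality attained at the minimizing indices; since the lemma asserts a strict inequality, take e.g.\ $\eta=\tfrac12(\min_i\mu_i)(\min_j\nu_j)$.
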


\section{Computational Setup and Results}\label{sec:numerics}

The Alternation Algorithm \ref{alg:alternation} was implemented in Python. The surrogate problems \eqref{eq:Emin_k} in ln.\ \ref{ln:milp_solve} were solved with the off-the-shelf solver Gurobi using the built-in piecewise linear model. The program was validated by successfully solving non-trivial instances with known solutions. The maximization was implemented using the open-source solver Ipopt. The program was validated with the built-in first order derivative check using finite differences. Results were validated by checking the scaled NLP error and the condition number of the reduced Hessian of the Lagrangian at the optimal point.

We have solved all of the 4851 benchmark instances. Both programs were executed on a Lenovo ThinkPad E14 Gen 6 with AMD Ryzen 7 processor.

The minimization of the entropy function $H$ with Algorithm \ref{alg:alternation} was successful for a prescribed tolerance of
$\varepsilon = 10^{-4}$. Some instances showed exceptionally high running times of up to 5 hours and 14 minutes. For $\varepsilon=10^{-3}$, in comparison, the longest running time for a single instance was around 10 minutes. Overall, the program provided optimal solutions for all instances and can thus be deemed a success.

The Ipopt-based maximization resulted in optimal solutions for all instances when using the HSL solver ma57 \cite{duff2004ma57} and providing Hessian information. The default solver MUMPS did not find optimal solutions for 112 instances. Therefore, only the run with ma57 is considered. Validation efforts for the obtained results showed that condition numbers of the reduced Hessians ranged from 1 to 12,799,731 with a median of 180,350.

From the optimal solutions $P^{\min}_{k\ell}$ and $P^{\max}_{k\ell}$ for all problem instances $(k,\ell) \in \{1,\dots,99\}^2$, we determined the scaled MI ratio
$\frac{MI(M_{k\ell})-MI(P^{\min})}{MI(P^{\max})-MI(P^{\min})}$
where $M_{k\ell}$ is the 
contingency table of amino acid occurrences for 
positions $k$ and $\ell$ in the molecule. The one-point entropies cancel each other and 
the scaled MI ratio reads
\begin{gather}
    \rho(M_{k\ell})=\frac{H(M_{k\ell})-H(P^{\max})}{H(P^{\min})-H(P^{\max})}
\end{gather}
per data point (problem instance), which is visualized in
Figure \ref{fig:quotient_visualization}. Figure \ref{fig:scatter_entropy} shows the correlation between the entropy values $H(M_{k\ell})$ and the entropy quotient $\rho(M_{k\ell})$ for non-trivial\footnote{We consider contingency tables with just one row/column as trivial because the induced feasible sets are singletons.} data points, scaled logarithmically ([nat]). The generalized correlation (Kendall's $\tau$) is $0.22$  
for all cases and
$0.67$ 
for the non-trivial cases.\footnote{Both tests showed a $p$-value $< 2.2\cdot 10^{-16}$
under the null hypothesis of no correlation
($\tau=0$) and were
computed via the internal routine of 
the statistics package {\tt R} \cite{R}.}

As is evident from Fig.~\ref{fig:combined_quotient} the normalization procedure by computing $\rho$s gives rise to ($\tau \approx 0.6$) similar values, but also induces a systematic correction (Fig.~\ref{fig:scatter_entropy}). At the same time, the results in Fig.~\ref{fig:quotient_visualization} show a more nuanced picture than the pure MI which tends to result in \emph{stripes} of high or low MI values for some $k$ or $\ell$. Here,
we omitted the trivial entries at $k,\ell \in \{1,26,27,40,44,51,80,86\}$ to obtain \emph{effective ids} and replaced the diagonal entries with an average to exploit the full range of intensities. A detailed analysis of the biomolecular/evolutionary implications will be given in a forthcoming publication \cite{future}.
\begin{figure}
    \centering
    \begin{subfigure}[b]{0.45\textwidth}
        \centering
        \includegraphics[width=.85\textwidth]{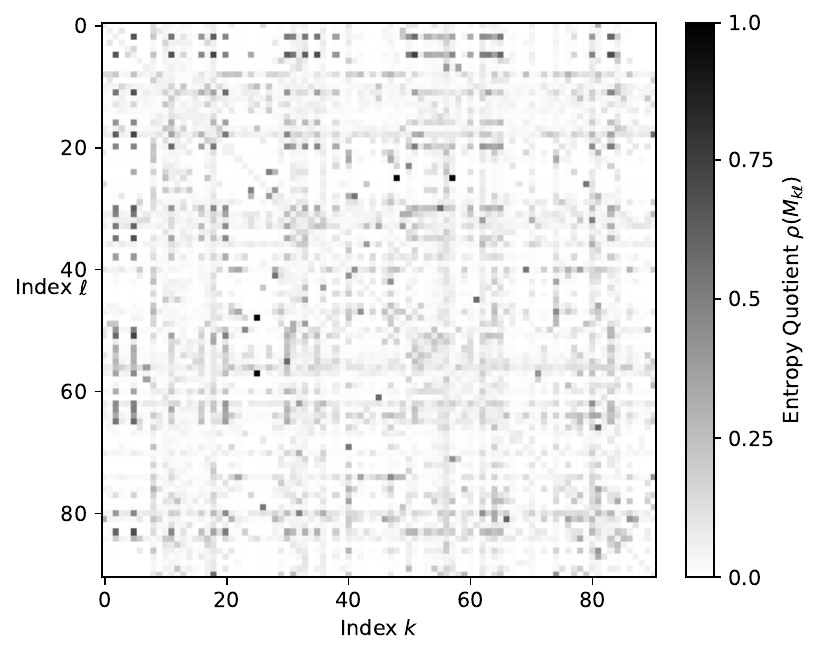}
        \caption{Visualization of $\rho(M_{k\ell})$ for non-trivial cases.}
        \label{fig:quotient_visualization}
    \end{subfigure}
    \hfill
    \begin{subfigure}[b]{0.45\textwidth}
        \centering
        \includegraphics[width=\textwidth]{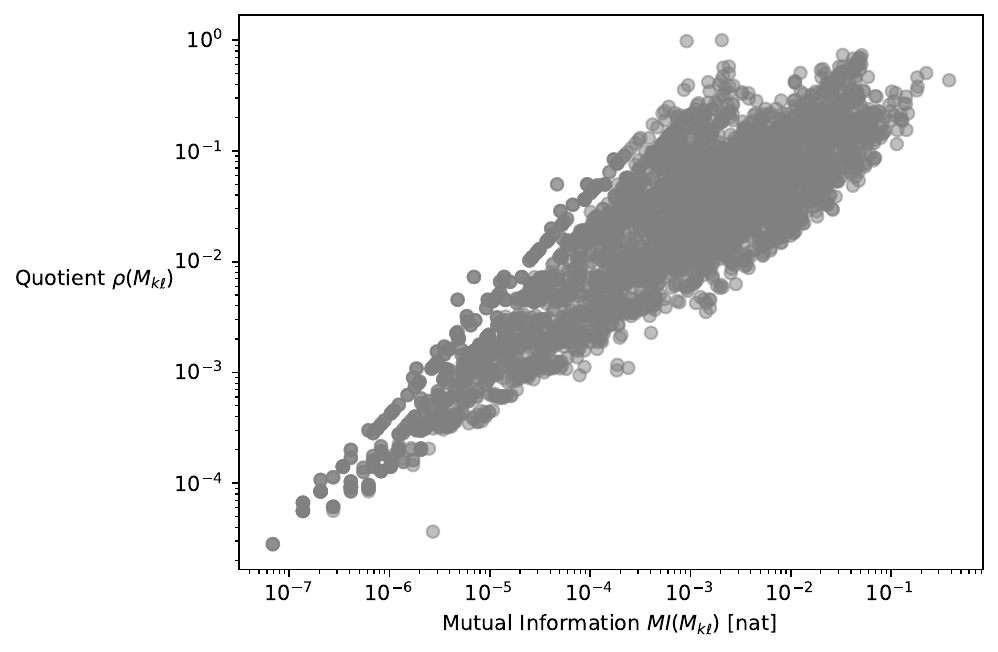}
        \caption{Scatterplot $MI(M_{k\ell})$ vs. $\rho(M_{k\ell})$ for non-trivial cases.}
        \label{fig:scatter_entropy}
    \end{subfigure}
    \caption{Visualizations of the entropy quotient $\rho$ and comparison with mutual information.}
    \label{fig:combined_quotient}
\end{figure}

\section{Conclusion}\label{sec:conclusion}
The minimization and maximization algorithms proposed in \S\ref{sec:minimization} and \S\ref{sec:maximization} provided valuable results for all data points that we applied to find the scaled MI ratio in \S\ref{sec:numerics}. Future work seems promising for the underlying biological research, as a significant generalized correlation between the MI values and the scaled MI ratios was observed.

{\small \textbf{Competing Interests}\quad The authors have no conflicts of interest to declare that are relevant to the content of this article.}

\bibliographystyle{plain}
\bibliography{kh}

\begin{thebibliography}{10}
\providecommand{\url}[1]{{#1}}
\providecommand{\urlprefix}{URL }
\expandafter\ifx\csname urlstyle\endcsname\relax
  \providecommand{\doi}[1]{DOI \discretionary{}{}{}#1}\else
  \providecommand{\doi}{DOI \discretionary{}{}{}\begingroup
  \urlstyle{rm}\Url}\fi

\bibitem{Hamacher2013_3}
M.~Waechter, K.~Jaeger, D.~Thuerck, S.~Weissgraeber, S.~Widmer, M.~Goesele,
  K.~Hamacher, Concurrency and Computation: Practice and Experience
  \textbf{26}(6), 1278 (2014).
\newblock \doi{10.1002/cpe.3074}.
\newblock \urlprefix\url{http://dx.doi.org/10.1002/cpe.3074}

\bibitem{Hamacher2010_5}
S.~Bremm, T.~Schreck, P.~Boba, S.~Held, K.~Hamacher, BMC Bioinformatics
  \textbf{11}, 330 (2010)

\bibitem{Hamacher2008}
K.~Hamacher, Gene \textbf{422}, 30 (2008)

\bibitem{Pan2007}
C.~Pan, J.~Kim, L.~Chen, Q.~Wang, C.~Lee, Nuc. Acids Res. \textbf{35}, D371
  (2007)

\bibitem{Chen2004}
L.~Chen, A.~Perlina, C.J. Lee, J. Virol. \textbf{78}(7), 3722 (2004).
\newblock \doi{10.1128/JVI.78.7.3722-3732.2004}

\bibitem{gurobi}
{Gurobi Optimization, LLC}.
\newblock {Gurobi Optimizer Reference Manual} (2025).
\newblock \urlprefix\url{https://www.gurobi.com}

\bibitem{wachter2006implementation}
A.~W{\"a}chter, L.T. Biegler, Mathematical Programming \textbf{106}(1), 25
  (2006)

\bibitem{murty1987some}
K.G. Murty, S.N. Kabadi, Mathematical Programming \textbf{37}, 117 (1987)

\bibitem{pardalos1991quadratic}
P.M. Pardalos, S.A. Vavasis, Journal of Global Optimization \textbf{1}(1), 15
  (1991)

\bibitem{geissler2011using}
B.~Gei{\ss}ler, A.~Martin, A.~Morsi, L.~Schewe, in \emph{Mixed Integer
  Nonlinear Programming} (Springer, 2011), pp. 287--314

\bibitem{clason2021entropic}
C.~Clason, D.A. Lorenz, H.~Mahler, B.~Wirth, Journal of Mathematical Analysis
  and Applications \textbf{494}(1), 124432 (2021)

\bibitem{duff2004ma57}
I.S. Duff, ACM Transactions on Mathematical Software (TOMS) \textbf{30}(2), 118
  (2004)

\bibitem{R}
{R Core Team}, \emph{R: A Language and Environment for Statistical Computing}.
\newblock R Foundation for Statistical Computing, Vienna, Austria (2022).
\newblock \urlprefix\url{https://www.R-project.org/}

\bibitem{future}
Franke, Hamacher, Manns, et~al.,  To be published

\end{thebibliography}

\end{document}